\numberwithin{equation}{section}
\numberwithin{subsection}{section}
\newenvironment{enumerate1}
{\begin{enumerate}[\upshape (1)]}
{\end{enumerate}}
\newtheorem*{namedtheorem}{\theoremname}
\newcommand{\theoremname}{testing}
\newtheorem{theorem}{Theorem}[section]
\newtheorem{proposition}[theorem]{Proposition}
\newtheorem{proposition-definition}[theorem]
{Proposition-Definition}
\newtheorem{corollary}[theorem]{Corollary}
\newtheorem{lemma}[theorem]{Lemma}
\theoremstyle{definition}
\newtheorem{definition}[theorem]{Definition}
\newtheorem{example}[theorem]{Example}
\theoremstyle{remark}
\renewcommand{\mathcal}{\mathscr}
 \newcommand\cB{\mathcal{B}}
\newcommand\cM{\mathcal{M}} 
\newcommand\cO{\mathcal{O}}
\newcommand\GG{\mathbb{G}}
 \newcommand\ZZ{\mathbb{Z}}
 \newcommand\frp{\mathfrak{p}}
\newcommand\arr{\ifinner\to\else\longrightarrow\fi}
\newcommand\arrto{\ifinner\mapsto\else\longmapsto\fi}
\newcommand{\eqdef}{\mathrel{\smash{\overset{\mathrm{\scriptscriptstyle def}} =}}}
\renewcommand\th{^\text{th}}
\def\displaytimes_#1{\mathrel{\mathop{\times}\limits_{#1}}}
\def\displayotimes_#1{\mathrel{\mathop{\bigotimes}\limits_{#1}}}
\newcommand\spec{\operatorname{Spec}}
\newcommand{\underisom}{\mathop{\underline{\mathrm{Isom}}}\nolimits}
\newlength{\ignora}
\newcommand{\mmu}{\boldsymbol{\mu}}
\newcommand\radice[2][\relax]{\hspace{-1.5pt}\sqrt[\uproot{2}#1]{#2}}
\DeclareFontFamily{U}{mathx}{\hyphenchar\font45}
\DeclareFontShape{U}{mathx}{m}{n}{
      <5> <6> <7> <8> <9> <10>
      <10.95> <12> <14.4> <17.28> <20.74> <24.88>
      mathx10
      }{}
\DeclareSymbolFont{mathx}{U}{mathx}{m}{n}
\DeclareMathAccent{\widecheck}{0}{mathx}{"71}
\DeclareMathAccent{\wideparen}{0}{mathx}{"75}
\renewcommand{\epsilon}{\varepsilon}
\begin{document}

\title[An arithmetic valuative criterion for tame algebraic stacks]{An arithmetic valuative criterion\\for proper maps of tame algebraic stacks}

\author[Bresciani]{Giulio Bresciani}
\author[Vistoli]{Angelo Vistoli}

\address{Scuola Normale Superiore\\Piazza dei Cavalieri 7\\
56126 Pisa\\ Italy}
\email[Vistoli]{angelo.vistoli@sns.it}
\email[Bresciani]{giulio.bresciani@gmail.com}

\thanks{The second author was partially supported by research funds from Scuola Normale Superiore, project \texttt{SNS19\_B\_VISTOLI}, and by PRIN project ``Derived and underived algebraic stacks and applications''.  The paper is based upon work partially supported by the Swedish Research Council under grant no. 2016-06596 while the second author was in residence at Institut Mittag-Leffler in Djursholm}


\begin{abstract}
	The valuative criterion for proper maps of schemes has many applications in arithmetic, e.g. specializing $\mathbb{Q}_{p}$-points to $\mathbb{F}_{p}$-points. For algebraic stacks, the usual valuative criterion for proper maps is ill-suited for these kind of arguments, since it only gives a specialization point defined over an extension of the residue field, e.g. a $\mathbb{Q}_{p}$-point will specialize to an $\mathbb{F}_{p^{n}}$-point for some $n$. We give a new valuative criterion for proper maps of tame stacks which solves this problem and is well-suited for arithmetic applications. As a consequence, we prove that the Lang-Nishimura theorem holds for tame stacks.
\end{abstract}

\maketitle

\section{Introduction}

The well known and extremely useful valuative criterion for properness says, in particular, that if $X \arr Y$ is a proper morphism of schemes, $R$ is a DVR with quotient field $K$ and residue field $k$, and we have a commutative diagram
   \[
   \begin{tikzcd}
   \spec K \ar[d, hook]\rar & X\dar\\
   \spec R \rar & S
   \end{tikzcd}
   \]
there exists a unique lifting $\spec R \arr X$ of $\spec R \arr Y$ extending $\spec K \arr X$. This has many arithmetic applications: most of them use that the statement above ensures the existence of a lifting $\spec k \arr X$ of the composite $\spec k \subseteq \spec R \arr Y$.

If $X$ and $Y$ are algebraic stack, and $X \arr Y$ is a proper morphism, then this fails, even in very simple examples, unless $X \arr Y$ is representable. The correct general statement is that there exists a local extension of DVR $R \arr R'$, such that if we denote by $K'$ the fraction field of $R'$, the composite $\spec R' \arr \spec R \arr Y$ has a lifting $\spec R' \arr X$ extending the composite $\spec K' \arr \spec K \arr X$ (see for example  \cite[\href{https://stacks.math.columbia.edu/tag/0CLY}{Tag 0CLY}]{stacks-project}). For arithmetic applications this is problematic, because the extension $R \subseteq R'$ will typically induce a nontrivial extension of residue fields, so it does not imply that $\spec k \arr Y$ lifts to $\spec k \arr X$, as in the case of schemes.

When $X$ and $Y$ are Deligne--Mumford stacks over a field of characteristic~$0$ a substitute was found by the first author in \cite[Theorem~1]{giulio-section-birational}. In this note we we extend this, in a somewhat more precise form, to positive and mixed characteristic. In this context the correct generality is that of \emph{tame stacks}, in the sense of \cite{dan-olsson-vistoli1}. Tame stacks are algebraic stacks with finite inertia, such that the automorphism group scheme of any object over a field is linearly reductive. In characteristic~$0$ they coincide with Deligne--Mumford stacks with finite inertia, but in positive and mixed characteristic there are Deligne--Mumford stacks with finite inertia that are not tame, and tame stacks that are not Deligne--Mumford.

In our version the role of the DVR $R'$ above is played by a \emph{root stack} $\radice[n]{\spec R}$; this is not a scheme, but a tame stack with a map $\radice[n]{\spec R} \arr \spec R$, which is an isomorphism above $\spec K \subseteq \spec R$ (see the discussion at the beginning of \S\ref{sec:valuative}). The statement of our main theorem \ref{thm:valuative} is that if $X \arr Y$ is a proper morphism of tame algebraic stacks and we have a commutative diagram as above, there exists a unique positive integer $n$ and a unique representable lifting $\radice[n]{\spec R} \arr X$ of the composite $\radice[n]{\spec R} \arr \spec R \arr Y$ extending $\spec K \arr X$. The key point for arithmetic applications is that the closed point $\spec k \arr \spec R$ lifts to $\spec k \arr \radice[n]{\spec R}$. This statement is much harder to prove in arbitrary characteristic than in characteristic~$0$.

Besides the original application to Grothendieck's section conjecture in \cite{giulio-section-birational}, this valuative criterion has been applied in \cite{giulio-angelo-genericity} to give new proofs and stronger versions of the genericity theorem for essential dimension. 

Recall that the Lang-Nishimura theorem states that the property of having a rational point is a birational invariant of smooth proper varieties. Another consequence of our version of the valuative criterion is that the Lang-Nishimura theorem generalizes to tame stacks, see \ref{LN}. Our version of the Lang-Nishimura theorem has an immediate corollary, which we find surprising: if $\cM$ is a smooth tame stack which is generically a scheme and $\overline{M}\to M$ is a resolution of singularities of the coarse moduli space $\cM\to M$, then a rational point of $M(k)$ lifts to $\cM$ if and only if it lifts to $\overline{M}$. This gives a hint of the applications of the Lang--Nishimura theorem to fields of moduli, which will be the subject of the forthcoming papers \cite{giulio-angelo-fields-moduli, giulio-singularities}.

\section{Notations and conventions}

We will follow the conventions of \cite{knutson} and \cite{laumon-moret-bailly}; so the diagonals of algebraic spaces and algebraic stacks will be separated and of finite type. In particular, every algebraic space will be \emph{decent}, in the sense of \cite[Definition~03I8]{stacks-project}.

We will follow the terminology of \cite{dan-olsson-vistoli1}: a \emph{tame stack} is an algebraic stack $X$ with finite inertia, such that its geometric points have linearly reductive automorphism group. This is equivalent to requiring that $X$ is étale locally over its moduli space a quotient by a finite, linearly reductive group scheme \cite[Theorem 3.2]{dan-olsson-vistoli1}.

More generally, a morphism $f\colon X \arr Y$ of algebraic stacks is \emph{tame} if the relative inertia group stack $I_{X/Y} \arr X$, defined as in \cite[\href{https://stacks.math.columbia.edu/tag/050P}{Section 050P}]{stacks-project}, is finite and has linearly reductive geometric fibers. See \cite[\S 3]{dan-olsson-vistoli-2}.

Using \cite[\href{https://stacks.math.columbia.edu/tag/0CPK}{Lemma 0CPK}]{stacks-project} one can easily prove the following.

\begin{proposition}
Let $f\colon X \arr Y$ be a morphism of algebraic stacks. The following conditions are equivalent.

\begin{enumerate1}

\item $f$ is tame.

\item If $Z \arr Y$ is a morphism, and $Z$ is a scheme, then $Z \times_{Y}X$ is a tame stack.

\item If $Z \arr Y$ is a morphism, and $Z$ a tame stack, then $Z \times_{Y}X$ is also tame.

\end{enumerate1}

Furthermore, if $X$ is tame, then the morphism $f$ is also tame.
\end{proposition}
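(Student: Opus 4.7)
The proof hinges on a single structural input—the compatibility of the relative inertia with base change. Concretely, for any morphism $Z \to Y$, setting $W \eqdef Z \times_{Y} X$, \cite[\href{https://stacks.math.columbia.edu/tag/0CPK}{Lemma 0CPK}]{stacks-project} supplies a canonical isomorphism $I_{W/Z} \cong W \times_{X} I_{X/Y}$ sitting inside a left-exact sequence $1 \to I_{W/Z} \to I_{W} \to I_{Z} \times_{Z} W$ of group stacks over $W$. The rest is bookkeeping with two standard closure properties of finite linearly reductive group schemes over a field: they are closed under passage to closed subgroup schemes and to extensions (see \cite[\S~2]{dan-olsson-vistoli1}).

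The implication (3) $\Rightarrow$ (2) is trivial because schemes are tame. For (2) $\Rightarrow$ (1), I would choose a smooth surjection $Z \to Y$ with $Z$ a scheme; since $I_{Z}$ is then trivial, the left-exact sequence collapses to $I_{W} \cong I_{W/Z} \cong W \times_{X} I_{X/Y}$, and tameness of $W$ translates directly into finiteness of $I_{X/Y}$ together with linear reductivity of its geometric fibers, descended along the smooth cover $W \to X$. For (1) $\Rightarrow$ (3), with $Z$ tame, both $I_{W/Z}$ (by the base-change formula and tameness of $f$) and $I_{Z} \times_{Z} W$ (by tameness of $Z$) are finite over $W$ with linearly reductive geometric fibers. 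Finiteness of $I_{W} \to W$ then follows from the sequence, and at each geometric point $w \in W$ lying over $z \in Z$ the group scheme $I_{W, w}$ is an extension of a closed subgroup of $I_{Z, z}$ by $I_{W/Z, w}$, hence linearly reductive.

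The concluding assertion—that tameness of $X$ forces $f$ to be tame—is a direct consequence: $I_{X/Y}$ is the kernel of the morphism $I_{X} \to I_{Y} \times_{Y} X$, so it is a closed substack of $I_{X}$, and each of its geometric fibers is a closed subgroup scheme of a finite linearly reductive group scheme, hence itself finite and linearly reductive. The only step where I expect any friction is checking that \cite[\href{https://stacks.math.columbia.edu/tag/0CPK}{Lemma 0CPK}]{stacks-project} is phrased in the exact form needed to extract both the base-change identification and the left-exact sequence in this generality, but that is precisely its purpose, so the proof should be essentially formal.
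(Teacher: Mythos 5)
Your proof is correct and is exactly the argument the paper has in mind: the paper gives no proof, simply remarking that the statement follows easily from \cite[\href{https://stacks.math.columbia.edu/tag/0CPK}{Lemma 0CPK}]{stacks-project}, which is precisely the tool you invoke (base-change compatibility of relative inertia plus the left-exact sequence $1\to I_{W/Z}\to I_W\to I_Z\times_Z W$), combined with the closure of finite linearly reductive group schemes under closed subgroups and extensions. The one place where you are slightly terse is the assertion that finiteness of $I_W\to W$ ``follows from the sequence'' in the proof of (1)$\Rightarrow$(3): the sequence immediately gives that $I_W\to W$ is quasi-finite and separated with finite kernel onto $I_Z\times_Z W$, but promoting this to \emph{finite} requires one further observation (e.g.\ that $I_W/I_{W/Z}\hookrightarrow I_Z\times_Z W$ is a closed immersion because monomorphisms of finite group schemes over a field are closed immersions and one can check properness fibrewise, or simply a citation to the discussion of tame morphisms in \cite[\S 3]{dan-olsson-vistoli-2}); this is the same level of detail the paper elides, so it is not a defect of your argument relative to the source.
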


\section{The valuative criterion}\label{sec:valuative}

A basic example of tame stacks is \emph{root stacks} (see \cite[Appendix~B2]{dan-tom-angelo2008}). We will need this in the following situation. Let $R$ be a DVR with uniformizing parameter $\pi$ and residue field $k \eqdef R/(\pi)$. If $n$ is a positive integer, we will denote by $\radice[n]{\spec R}$ the $n\th$ root of the Cartier divisor $\spec k\subseteq \spec R$. It is a stack over $\spec R$, such that given a morphism $\phi\colon T \arr \spec R$, the groupoid of liftings $T \arr \radice[n]{\spec R}$ is equivalent to the groupoid whose objects are triples $(L, s, \alpha)$, where $L$ is an invertible sheaf on $T$, $s \in L(T)$ is a global section of $L$, and $\alpha$ is an isomorphism $L^{\otimes n} \simeq \cO_{T}$, such that $\alpha(s^{\otimes n}) = \phi^\sharp(\pi)$. Alternatively, $\radice[n]{\spec R}$ can be described as the quotient stack $[\spec R[t]/(t^{n} - \pi)/\mmu_{n}]$, where the action of $\mmu_{n}$ on $\spec R[t]/(t^{n} - \pi)$ is by multiplication on $t$. The morphism $\rho\colon \radice[n]{\spec R} \arr \spec R$ is an isomorphism outside of $\spec k \subseteq \spec R$, while the reduced fiber $\rho^{-1}(\spec k)_{\mathrm{red}}$ is non-canonically isomorphic to the classifying stack $\cB_{k}\mmu_{n}$. In particular the embedding $\spec k \arr \spec R$ lifts to a morphism $\spec k \arr \radice[n]{\spec R}$.

The following is our version of the valuative criterion for properness.

\begin{theorem}\label{thm:valuative}
Let $f\colon X \arr Y$ be a tame, proper morphism of algebraic stacks, $R$ a DVR with quotient field $K$. Suppose that we have a $2$-commutative square
   \[
   \begin{tikzcd}
   \spec K \ar[d, hook]\rar & X \dar{f}\\
   \spec R \rar & Y
   \end{tikzcd}
   \]
Then there exists a unique positive integer $n$ and a representable lifting $\radice[n]{\spec R} \arr X$ of the given morphism $\spec R \arr Y$, making the diagram
	\[\begin{tikzcd}
    &	\spec K\rar\ar[dl, hook]	\	&	X\dar	& 	\\
		\radice[n]{\spec R}\rar\ar[rru]	&	\spec R\rar\ar[from=u,hook,crossing over]	&	Y		& 
	\end{tikzcd}\]
$2$-commutative. Furthermore, the lifting is unique up to a unique isomorphism.
\end{theorem}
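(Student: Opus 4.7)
My strategy is to reduce the lifting problem to extending a torsor under a finite linearly reductive group scheme from $\spec K$ to a root stack, handle the geometric part via the classical valuative criterion for a proper scheme morphism, and use linear reductivity to accommodate arbitrary residue characteristic. For existence, I would first reduce to $Y = \spec R$ by base change, and work \'etale-locally on the coarse moduli space of $X$, applying the local structure theorem for tame stacks \cite[Theorem 3.2]{dan-olsson-vistoli1} to write $X$ as a quotient $[U/G_0]$ in an \'etale chart near the specialization of the $K$-point, with $G_0$ a finite linearly reductive group scheme and $U$ a scheme, proper over the chart. The $K$-point then corresponds to a $G_0$-torsor $P_K \to \spec K$ together with a $G_0$-equivariant morphism to $U$. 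The main step is to extend $P_K$ to a $G_0$-torsor $P$ on some $\radice[n]{\spec R}$: here I would use that $G_0$ is linearly reductive, hence an extension of an \'etale group scheme by a diagonalizable one over the strict henselization, and that the inertia $\mmu_n$ at the closed point of $\radice[n]{\spec R}$ absorbs the Kummer-type ramification that prevents $P_K$ from extending directly to $\spec R$. Once $P$ is constructed, the equivariant morphism to $U$ extends by the classical valuative criterion applied to the proper scheme morphism $U \to \spec R$. The resulting $\radice[n]{\spec R} \to X$ is made representable by taking $n$ minimal, which forces the induced map on inertia at the closed point to be injective.

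For uniqueness, given two representable lifts $\phi_i \colon \radice[n_i]{\spec R} \to X$, I would pull both back to $\radice[\ell]{\spec R}$ with $\ell = \operatorname{lcm}(n_1, n_2)$. The two induced morphisms agree on the dense open $\spec K$ and so, by separatedness of $X \to Y$ applied to the relative Isom stack, agree canonically on all of $\radice[\ell]{\spec R}$. Representability of the $\phi_i$ then forces the projections $\radice[\ell]{\spec R} \to \radice[n_i]{\spec R}$ to be representable; inspecting the inertia kernel $\mmu_{\ell/n_i}$ at the closed point forces $\ell = n_i$, hence $n_1 = n_2$ and the two morphisms become uniquely isomorphic over $X$.

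\textbf{Main obstacle.} The hardest step is the torsor extension: extending an arbitrary $G_0$-torsor from $\spec K$ to some root stack $\radice[n]{\spec R}$. For \'etale $G_0$ this is controlled by the local Galois group of $K$ together with strict henselization; for diagonalizable $G_0$ it is essentially Kummer theory on root stacks. The genuine difficulty in positive and mixed characteristic lies in combining these two when $G_0$ is an extension of \'etale by diagonalizable and the extension may not split globally. This is exactly where the linear reductivity of the stabilizers---rather than merely \'etaleness of the inertia---becomes essential, and I expect a careful cohomological vanishing argument exploiting linear reductivity of finite group schemes to be the heart of the proof.
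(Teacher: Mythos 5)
Your strategy—reduce to $Y = \spec R$, use the local structure theorem to write $X$ étale\dash locally as $[U/G_0]$, extend the $G_0$\dash torsor from $\spec K$ to a root stack, then extend the equivariant map to $U$ by the classical valuative criterion—is genuinely different from the one used in the paper, and at the level of plan it is a reasonable alternative. The paper instead reduces, via Rydh's results (\cite[Theorem~B]{ryd11} plus normalization over a Nagata base), to a purely structural classification: a normal tame stack of finite type over a DVR containing $\spec K$ as a scheme\dash theoretically dense open substack is a root stack $\radice[n]{\spec R}$ (Proposition~\ref{tameroot}). This classification is proved in two stages (first the Deligne--Mumford tame case, then the general tame case by quotienting out a $p$\dash power diagonalizable subgroup, normalizing the resulting DM quotient to a root stack prime to $p$, and then analyzing the remaining diagonalizable inertia after pulling back along $\spec R^{(n)}\to\radice[n]{\spec R}$). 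Existence then falls out by taking $\overline X$, the normalization of the model of $X$ in which $\spec K$ is a dense open; uniqueness is immediate because any $\radice[m]{\spec R}$ is normal and hence any representable lift factors through $\overline X$. The normalization approach is what makes the argument tractable in mixed and positive characteristic: it replaces a delicate non\dash abelian torsor extension problem by a classification of normal models, which is handled by a tangent\dash space computation at the generic point of the special fibre (Lemma~\ref{normalroot}) together with \cite[Lemma~8.5]{ryd11}.

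Concretely, here is where your plan has gaps. First, the torsor extension step, which you flag as the heart of the matter, is not actually close to being a cohomological vanishing statement: $H^1(K,G_0)$ is a pointed set and there is no reason for any map between nonabelian $H^1$'s to be trivial or surjective. What actually resolves the difficulty is the specific structure of $G_0$ (extension of a constant tame group $H$ by a diagonalizable $p$\dash group $\Delta$, possibly non\dash split and with $H$ acting nontrivially on $\Delta$), and one needs to handle the interaction of the tame ramification absorbed by the $\mmu_n$\dash inertia with the $\Delta$\dash part of the torsor; the paper's proof of Proposition~\ref{tameroot} does precisely this by first normalizing $[V/H]$ with $V=U/\Delta$ and only then analyzing the residual diagonalizable inertia after an étale base change $\spec R^{(n)}\to\radice[n]{\spec R}$. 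Second, the étale chart in which $[U/G_0]$ lives sits over an étale neighbourhood of the closed point of the coarse space, i.e.\ over a quasi\dash finite étale extension $R'/R$; your plan needs an argument to descend the resulting $\radice[n]{\spec R'}\to X_{R'}$ back down to $\radice[n]{\spec R}\to X$ (this is Lemma~\ref{etale} in the paper, which itself relies on a representable valuative argument over a semilocal Dedekind scheme). Third, the representability claim via ``take $n$ minimal'' and the uniqueness argument via $\operatorname{lcm}$ are plausible but not routine; the paper obtains representability for free from finiteness of the normalization and uniqueness from the fact that $\radice[m]{\spec R}$ is normal (Lemma~\ref{normalroot}), so any candidate lift factors uniquely through $\overline X$, and then Lemma~\ref{rootmor} pins down $n$. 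If you want to pursue the torsor\dash extension route, you would essentially have to re\dash prove the content of Proposition~\ref{tameroot} in torsor language, handling the non\dash split $\Delta$--$H$ extension explicitly; the normalization trick is what makes that step avoidable.
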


\begin{corollary}\label{cor:valuative}
In the situation above, if $k$ is the residue field of $R$, the composite $\spec k \subseteq \spec R \arr Y$ has a lifting $\spec k \arr X$.
\end{corollary}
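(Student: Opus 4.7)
The plan is to apply Theorem \ref{thm:valuative} and then precompose with the canonical lift of the closed point of $\spec R$ to the root stack.

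More precisely, I would first invoke Theorem \ref{thm:valuative} on the given square to obtain a positive integer $n$ and a representable morphism $g \colon \radice[n]{\spec R} \arr X$ lifting $\spec R \arr Y$ (up to $2$-isomorphism) and extending $\spec K \arr X$. Next, I would use the morphism $\iota \colon \spec k \arr \radice[n]{\spec R}$ lifting the closed embedding $\spec k \hookrightarrow \spec R$, whose existence was noted in the discussion of root stacks at the start of \S\ref{sec:valuative}. If one wants to be self-contained, $\iota$ can be produced directly via the modular description by the triple $(\cO_{\spec k}, 0, \id)$, for which the condition $\alpha(s^{\otimes n}) = \phi^{\sharp}(\pi)$ reduces to the identity $0 = 0$ in $k$, since $\pi$ is sent to $0$ under $R \arr k$. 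The desired lift is then the composite $g \circ \iota \colon \spec k \arr X$.

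The compatibility check is a short $2$-diagram chase: the theorem provides a $2$-isomorphism $f \circ g \cong (\radice[n]{\spec R} \arr \spec R \arr Y)$, and by construction $(\radice[n]{\spec R} \arr \spec R) \circ \iota = (\spec k \hookrightarrow \spec R)$, so $f \circ g \circ \iota$ is $2$-isomorphic to $\spec k \hookrightarrow \spec R \arr Y$, as required. I do not anticipate any real obstacle in this corollary: all the substantial content is contained in Theorem \ref{thm:valuative}, and the corollary amounts to the observation that the map $\rho \colon \radice[n]{\spec R} \arr \spec R$ admits a section over the closed point $\spec k \subseteq \spec R$.
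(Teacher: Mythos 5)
Your proof is correct and is exactly the paper's intended argument: apply Theorem~\ref{thm:valuative} to obtain $g\colon \radice[n]{\spec R}\arr X$, then compose with the lift $\spec k\arr\radice[n]{\spec R}$ noted at the beginning of \S\ref{sec:valuative}. Your explicit construction of $\iota$ via the triple $(\cO_{\spec k},0,\id)$ is a nice self-contained touch, but otherwise this matches the paper's reasoning.
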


As one would expect, these statements fail without the tameness hypothesis, even when $Y$ is a scheme and $X$ is a separated Deligne--Mumford stack.

\begin{example}\label{counter}
	Let $p$ be a prime, $R$ a DVR whose fraction field $K$ has characteristic $0$ and contains a $p$-th root of $1$, denoted by $\zeta_{p}$, while its residue field $k$ has characteristic $p$ and is not perfect. An example would be the localization of $\ZZ[\zeta_{p}][t]$ at a prime ideal of height~1 containing $p$.
	
	Choose an element $a\in R^{*}$ whose image in $k$ is not a $p$-th power, and set $R' \eqdef R(\radice[p]{a})$. Then $R'$ is a DVR, since the $R'\otimes_{R}k = k(\radice[p]{\overline{a}})$ is a field (here $\overline{a}$ is the class of $a$ in $k$). Write $K' = K(\radice[p]{a})$ for its fraction field and $k' = k(\radice[p]{\overline{a}})$ for its residue field.
	
	Call $C_{p}$ the cyclic group of order $p$ generated by $\zeta_{p} \in K^{*}$. The extension $K'/K$ is Galois with cyclic Galois group $C_{p}$ acting by $\radice[p]{a}\mapsto\zeta_{p}\radice[p]{a}$. The action of $C_{p}$ on $K'$ naturally extends to $R'$.
	
	Let $X$ be the quotient stack $[\spec R'/C_{p}]$; this is a separated Deligne--Mumford stack, but it is not tame. Since $(R')^{C_{p}} = R$ the moduli space of $X$ is $\spec R$, and we have  we have a natural map $X \arr \spec R$, which is an isomorphism over $\spec K \subseteq \spec R$. Since $k'/k$ is purely inseparable, then $X_{k}(k)$ is empty: such a $k$-rational point would correspond to a $C_{p}$-torsor $\spec A\to\spec k$ with an equivariant morphism $\spec A\to \spec k'$ and thus an embedding of $k'$ in the étale $k$-algebra $A$, which is clearly absurd. In particular, there is no map $\radice[n]{\spec R}\to X$ for any $n$.
\end{example}

\begin{definition}
	In the situation of Theorem~\ref{thm:valuative}, we call the integer $n$ the \emph{loop index} of the morphism $\spec K\to X$ at the place associated with $R \subseteq K$. If the loop index is $1$, we say that $\spec K\to X$ is \emph{untangled}.
\end{definition}

\begin{lemma}\label{loopram}
	Let $R \subseteq R'$ be an extension of DVRs with ramification index $e$, and let $K \subseteq K'$ be the fraction fields of $R$ and $R'$ respectively. If $X$ is a tame stack proper over $R$ and $\spec K\to X$ is a morphism with loop index $n$, the composite $\spec K' \to \spec K\to X$ has loop index equal to $n/\gcd(n,e)$.
\end{lemma}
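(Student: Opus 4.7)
My plan is to construct, setting $n' \eqdef n/\gcd(n,e)$, a representable morphism $\radice[n']{\spec R'} \to X$ extending the composite $\spec K' \to X$. The uniqueness in Theorem~\ref{thm:valuative} will then force the loop index of $\spec K' \to X$ to equal $n'$, as required.

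Write $d \eqdef \gcd(n,e)$, so that $n = dn'$ and $e = de'$ with $\gcd(n', e') = 1$. Pick uniformizers $\pi \in R$ and $\pi' \in R'$; the ramification hypothesis means $\pi = u(\pi')^e$ for some unit $u \in R'$. First I would construct an $\spec R$-morphism $F\colon \radice[n']{\spec R'}\to\radice[n]{\spec R}$ via the modular description of root stacks: a $T$-point $(L', s', \alpha')$ of $\radice[n']{\spec R'}$, where $\alpha'\bigl((s')^{\otimes n'}\bigr)=\pi'$, is sent to
\[
  F(L',s',\alpha') \;\eqdef\; \bigl((L')^{\otimes e'},\,(s')^{\otimes e'},\, u\cdot(\alpha')^{\otimes de'}\bigr).
\]
The identity $u\cdot(\pi')^{de'}=u(\pi')^e=\pi$ shows this triple is indeed a $T$-point of $\radice[n]{\spec R}$. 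Composing $F$ with the canonical representable lifting $\radice[n]{\spec R}\to X$ supplied by Theorem~\ref{thm:valuative} then yields a morphism $\radice[n']{\spec R'}\to X$ that restricts to $\spec K'\to X$ on the dense open $\spec K'$.

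The crux is representability of this composite, which I would check on inertia groups. On the generic open both root stacks are schemes, so there is nothing to verify. At the closed point, the inertia of $\radice[n']{\spec R'}$ is $\mmu_{n'}$ and that of $\radice[n]{\spec R}$ is $\mmu_n$; inspecting $F$, the induced homomorphism $\mmu_{n'}\to\mmu_n$ is $\zeta\mapsto\zeta^{e'}$, since scaling $L'$ by $\zeta$ scales $(L')^{\otimes e'}$ by $\zeta^{e'}$. Its scheme-theoretic kernel is $\mmu_{\gcd(n',e')}=\mmu_1$, so the map is a monomorphism of group schemes; combined with the representability of $\radice[n]{\spec R}\to X$, the composite $\radice[n']{\spec R'}\to X$ is representable. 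The main subtlety is that one must work scheme-theoretically with the $\mmu_m$'s, rather than with $k'$-points, to handle uniformly the case $\cha k\mid n'$ where $\mmu_{n'}$ fails to be \'etale; the identity $\mmu_{\gcd(n',e')}=\mmu_1$ is valid in all characteristics. Invoking then the uniqueness in Theorem~\ref{thm:valuative} identifies the loop index of $\spec K'\to X$ with $n' = n/\gcd(n,e)$.
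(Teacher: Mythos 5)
Your proposal is correct and follows essentially the same route as the paper's proof, which simply asserts (without writing down the map) that there is a natural representable morphism $\radice[m]{\spec R'}\to\radice[n]{\spec R}$ with $m=n/\gcd(n,e)$ inducing $\spec K'\to\spec K$, and then invokes the uniqueness clause of Theorem~\ref{thm:valuative}. You have supplied the explicit modular-description formula for that morphism and the inertia-group check of its representability (correctly done scheme-theoretically, so that it survives $\cha k\mid n'$), both of which the paper leaves implicit.
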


	\begin{proof}
		Write $m \eqdef n/\gcd(n,e)$; the statement follows from the fact that there is a natural representable morphism $\radice[m]{\spec R'}\to\radice[n]{\spec R}$ inducing the given morphism $\spec K' \arr \spec K$.
	\end{proof}

We spend the rest of this section proving Theorem~\ref{thm:valuative}. Given a DVR $R$ and $\pi\in R$ a uniformizing parameter, write $R^{(n)} \eqdef R[t]/(t^{n}-\pi)$, $K^{(n)} \eqdef K[t]/(t^{n}-\pi)$. We have $\radice[n]{\spec R} = [\spec R^{(n)}/\mmu_{n}]$. 

\begin{lemma}\label{rootmor}
	Let $R$ be a DVR, $m,n$ integers. A morphism $\radice[m]{\spec R}\to\radice[n]{\spec R}$ over $R$ exists if and only if $n|m$, and in this case it is unique up to equivalence.
\end{lemma}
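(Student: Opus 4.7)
\emph{Plan.} The strategy is to use the modular description of root stacks recalled at the start of \S\ref{sec:valuative}: a morphism $\radice[m]{\spec R}\to\radice[n]{\spec R}$ over $\spec R$ is the same as a triple $(L,s,\alpha)$ on $\radice[m]{\spec R}$ with $\alpha(s^{\otimes n})=\pi$, and $\radice[m]{\spec R}$ itself carries a tautological such triple $(L_m,s_m,\alpha_m)$ at level $m$, satisfying $\alpha_m(s_m^{\otimes m})=\pi$. Existence when $n\mid m$ is then immediate: setting $k\eqdef m/n$, the triple $(L_m^{\otimes k},s_m^{\otimes k},\alpha_m)$ satisfies $\alpha_m(s_m^{\otimes nk})=\alpha_m(s_m^{\otimes m})=\pi$, yielding the required morphism.

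For the converse and uniqueness (the case $n=1$ being trivial since $\radice[1]{\spec R}=\spec R$), the plan is to pull a putative triple $(L,s,\alpha)$ back along the $\mmu_m$-torsor $\spec R^{(m)}\to\radice[m]{\spec R}$, where $R^{(m)}=R[t]/(t^m-\pi)$ is a DVR with uniformizer $t$ and therefore has trivial Picard group. So $L$ becomes the trivial bundle equipped with an $\mmu_m$-character $\chi_a$ for some $a\in\{0,\dots,m-1\}$; using the weight decomposition $R^{(m)}=\bigoplus_{i=0}^{m-1}R\cdot t^i$, the section $s$ necessarily takes the form $c\cdot t^a$ with $c\in R$, while $\alpha$, being an $\mmu_m$-equivariant isomorphism $L^{\otimes n}\simeq\cO$, can exist only when $m\mid na$, in which case it is multiplication by a unit $u\in R^*$. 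Writing $na=mb$ with $0\le b<n$ and $c=v\pi^j$ with $v\in R^*$ and $j\ge 0$, the condition $\alpha(s^{\otimes n})=\pi$ unpacks (via $t^{na}=\pi^b$) to $uv^n\pi^{nj+b}=\pi$ in $R$, forcing $uv^n=1$ and $nj+b=1$. For $n\ge 2$ the only non-negative integer solution is $(j,b)=(0,1)$, whence $a=m/n$ and in particular $n\mid m$.

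Uniqueness then reduces to the observation that the only residual parameter is the unit $v\in R^*$, which is absorbed by the $R^*$-worth of automorphisms of the triple $(L_m^{\otimes m/n},s_m^{\otimes m/n},\alpha_m)$. The most delicate point of the argument will be the bookkeeping of the $\mmu_m$-equivariance on the atlas: one must check that the weight-$a$ and unit data are correctly read off, and that atlas-level rescalings of the triple genuinely descend to $2$-isomorphisms on $\radice[m]{\spec R}$, so that the lift is unique up to a unique $2$-isomorphism rather than to a nontrivial torsor of them.
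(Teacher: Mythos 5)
Your proposal is correct and takes essentially the same route as the paper's (very terse) proof: both reduce to the cover $\spec R^{(m)}$, where the trivial Picard group and a valuation count on a triple $(L,s,\alpha)$ with $\alpha(s^{\otimes n})=\pi$ force $n\mid m$ and pin the triple down up to a unique isomorphism. The only cosmetic difference is that you track the $\mmu_m$-equivariance explicitly and build the morphism in the existence direction directly from the tautological triple on $\radice[m]{\spec R}$, whereas the paper argues with (not-necessarily-equivariant) sections $\spec R^{(m)}\to\radice[n]{\spec R}$ and lets their uniqueness supply the descent data implicitly.
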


\begin{proof}
	This follows from the fact that a section $\spec R^{(m)}\to\radice[n]{\spec R}$ exists if and only if $n|m$, and in this case it is unique up to equivalence.
\end{proof}

\begin{lemma}\label{normalroot}
	Let $R$ be a DVR, $D\subset \spec R$ the divisor corresponding to the closed point, $m,n_{i},r_{i}$ for $i=1,\dots,m$ positive integers, with $n_{i}\ge 2$ for every $i$. The fibered product
	\[X=\prod_{i=1}^{m}\radice[n_{i}]{\spec R,r_{i}D}.\]
	is normal if and only if $m=1$ and $r_{1}=1$.
\end{lemma}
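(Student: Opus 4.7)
The plan is to reduce normality of $X$ to that of a concrete ring and then apply Serre's criterion. Setting $A\eqdef R[t_1,\dots,t_m]/(t_i^{n_i}-\pi^{r_i})_{i=1}^m$, one realises $X$ as the quotient stack $[\spec A/\prod_i\mmu_{n_i}]$ with $\spec A\to X$ an fppf torsor. Since Serre's $(R_1)$ and $(S_2)$ are fpqc-local, $X$ is normal if and only if $A$ is.

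The ring $A$ is finite free over the DVR $R$ on the monomial basis $\{t_1^{a_1}\cdots t_m^{a_m}:0\le a_i<n_i\}$, so it is one-dimensional; being $R$-flat with zero-dimensional Cohen--Macaulay fibres, it is itself Cohen--Macaulay, so $(S_2)$ is automatic. Moreover the closed fibre $A/\pi=k[t_1,\dots,t_m]/(t_i^{n_i})$ is local (each $t_i$ is nilpotent), so $A$ has a unique maximal ideal $\frm=(\pi,t_1,\dots,t_m)$, and the question of normality boils down to whether the one-dimensional local CM ring $A_\frm$ is a DVR -- equivalently, whether it is regular.

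The crux is then a short cotangent space calculation at $\frm$. Since $n_i\ge 2$ we have $t_i^{n_i}\in\frm^2$ for every $i$, so in $\frm/\frm^2$ the relation $t_i^{n_i}-\pi^{r_i}=0$ is vacuous when $r_i\ge 2$ and reduces to $\pi\equiv 0\pmod{\frm^2}$ when $r_i=1$. Hence
\[\dim_k\frm/\frm^2\;=\;\begin{cases} m+1 & \text{if every $r_i\ge 2$,}\\ m & \text{if some $r_i=1$.}\end{cases}\]
Since $\dim A_\frm=1$, regularity forces $m=1$ and $r_1=1$; conversely, in that case $A=R[t]/(t^{n_1}-\pi)$ is a (totally ramified) DVR extension of $R$, hence normal.

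The delicate point is the opening reduction: the group scheme $\prod_i\mmu_{n_i}$ fails to be smooth whenever the residue characteristic of $R$ divides some $n_i$, so one cannot simply invoke smooth descent of normality and must instead rely on fpqc descent of Serre's conditions through the fppf atlas $\spec A\to X$. Once this is granted the remainder is a direct computation.
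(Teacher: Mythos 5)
Your cotangent-space calculation for $A=R[t_1,\dots,t_m]/(t_i^{n_i}-\pi^{r_i})$ is correct, but the opening reduction---``Since Serre's $(R_1)$ and $(S_2)$ are fpqc-local, $X$ is normal if and only if $A$ is''---fails in exactly the direction you need, and this is a genuine gap.

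Normality \emph{descends} unconditionally along a faithfully flat morphism, so $A$ normal $\Rightarrow X$ normal, and the ``if'' direction of the lemma is fine: when $m=1$, $r_1=1$ the ring $A=R[t]/(t^{n}-\pi)$ is a DVR, hence $X$ is normal. But the ``only if'' direction requires $X$ normal $\Rightarrow A$ normal, which is \emph{ascent} of normality along $\spec A\to X$. Ascent of $(R_1)$ along a flat morphism requires the fibres to satisfy $(R_1)$ (indeed to be geometrically regular in the relevant codimension), and here the fibres are $\prod_i\mmu_{n_i}$, which is non-reduced---so not even $(R_0)$---over any point whose residue characteristic divides some $n_i$. $(S_2)$ does ascend, as you note, because the zero-dimensional fibres are Cohen--Macaulay; but $(R_1)$ does not, so ``$X$ normal iff $A$ normal'' is unproved. (Its truth is precisely what the lemma asserts, a posteriori, but you cannot assume it.) You correctly flag the non-smoothness of $\mmu_n$ as the delicate point, but ``fpqc descent of Serre's conditions'' only gives you the descent half.

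The paper avoids this by using the other standard presentation $\radice[n]{\spec R, rD}\simeq[V_{n,r}/\GG_m]$ with $V_{n,r}=\spec R[t,s]_s/(t^ns-\pi^r)$. The cover $V_{n,r}\to\radice[n]{\spec R,rD}$ is \emph{smooth}, so normality goes up and down freely, and the paper then performs essentially the same cotangent-space computation you do, but at the height-one prime $\frp=(t_1,\dots,t_m,\pi)$ of the smooth atlas $Y=\prod_iV_{n_i,r_i}$. Replacing your finite flat $\prod_i\mmu_{n_i}$-atlas by this smooth $\GG_m^m$-atlas repairs the argument while keeping your computation intact.
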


\begin{proof}
	Let $V_{n,r}$ be the scheme $\spec R[t,s]_{s}/(t^{n}s-\pi^{r})$, there is an action of $\GG_{m}$ on $V_{n,r}$ given by $(\lambda,t,s) \mapsto (\lambda t,\lambda^{-n}s)$ and $\radice[n]{\spec R,rD}\simeq [V_{n,r}/\GG_{m}]$, see \cite[Appendix B]{dan-tom-angelo2008}. Consider the fibered product 
	\[Y=\prod_{i}V_{n_{i},r_{i}}=R[t_{1},s_{1},\dots,t_{m},s_{m}]/(t_{i}^{n_{i}}s_{i}-\pi^{r_{i}}).\]
	The prime ideal $\frp=(t_{1},\dots,t_{m},\pi)$ is the generic point of the special fiber and has height $1$. Since $V_{r,n}\to\radice[n]{\spec R,rD}$ is smooth, then $Y\to X$ is smooth and hence $X$ is normal if and only if $Y$ is normal at $\frp$.
	
	Now consider the prime ideal $\frp_{0}=(t_{1},\dots,t_{m},\pi)\subset R[t_{1},s_{1},\dots,t_{m},s_{m}]$, $\frp$ and $\frp_{0}$ have equal residue fields and there is a natural surjective linear map $\frp_{0}/\frp_{0}^{2}\to\frp/\frp^{2}$. We have that $\frp_{0}/\frp_{0}^{2}$ has dimension $m+1$ generated by the classes of $[t_{i}],[\pi]$. If $m=1$ and $r_{1}=1$, then $\frp_{0}/\frp_{0}^{2}$ has dimension $2$ and $[\pi]$ is in the kernel of $\frp_{0}/\frp_{0}^{2}\to\frp/\frp^{2}$, hence $Y$ is normal at $\frp$.
	
	On the other hand, assume that $Y$ is normal at $\frp$, so that $\frp/\frp^{2}$ has dimension $1$. Since $n_{i}\ge 2$ for every $i$, the kernel of $\frp_{0}/\frp_{0}^{2}\to\frp/\frp^{2}$ is generated by the classes $[\pi^{r_{i}}]$ and hence has dimension $1$ if $r_{i}=1$ for some $i$, and dimension $0$ otherwise. Since $\frp_{0}/\frp_{0}^{2}$ has dimension $m+1$ and $\frp/\frp^{2}$ has dimension $1$, this implies that $m=1$ and $r_{1}=1$.
\end{proof}

\begin{lemma}\label{easyval}
	Let $A$ be a Dedekind domain with fraction field $K$, $D\subset \spec A$ an effective, reduced divisor. Let $f:X \arr \radice[n]{(\spec A,D)}$ be a representable, proper morphism. Every generic section $\spec K\to X$ of $f$ extends uniquely to a global section $\radice[n]{(\spec A,D)}\to X$.
	\begin{proof}
		Let $Y\subset X$ be the schematic image of a generic section $\spec K\to X$, we want to prove that $Y \arr \radice[n]{(\spec A,D)}$ is an isomorphism. Since the problem is local, we may assume that $A=R$ is a DVR and $D$ is either empty or the closed point. If $D$ is empty, then $\radice[n]{\spec R,D}=\spec R$ and this is simply the valuative criterion of properness. Suppose that $D$ is the closed point. Consider the flat morphism $\spec R^{(n)} \arr \radice[n]{\spec R}$ and write $X'=X\times_{\radice[n]{\spec R}}\spec R^{(n)}$, $Y'=Y\times_{\radice[n]{\spec R}}\spec R^{(n)}$. Thanks to \cite[\href{https://stacks.math.columbia.edu/tag/0CMK}{Lemma 0CMK}]{stacks-project} we have that $Y'\subset X'$ is the schematic image of the induced generic section $\spec K^{(n)}\to X'$. By the valuative criterion of properness, there is a section $\spec R^{(n)}\to X'$ which is a closed immersion since $X'$ is representable, this implies that $Y'\to\spec R^{(n)}$ is an isomorphism. It follows that $Y \arr \radice[n]{\spec R}$ is an isomorphism, too.
	\end{proof}
\end{lemma}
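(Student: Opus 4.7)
The plan is to reduce to the classical valuative criterion for proper maps of algebraic spaces, using the fppf cover $\spec R^{(n)} \to \radice[n]{\spec R}$ to strip the stacky structure. Since everything in sight is local on the base, I would first reduce to the case where $A = R$ is a DVR and $D$ is either empty or the closed point $\spec k$. The case $D = \emptyset$ is literally the classical valuative criterion, so the real content is the case where $D$ is the closed point.

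In that case, let $Y \subseteq X$ be the scheme-theoretic image of the generic section $\spec K \to X$; I would aim to show that the restricted map $Y \to \radice[n]{\spec R}$ is an isomorphism, for then its inverse gives the desired section. Since $\spec R^{(n)} \to \radice[n]{\spec R}$ is flat and surjective, the formation of scheme-theoretic image commutes with this base change (this is the content of the \texttt{0CMK} lemma in the Stacks Project), so writing $X' \eqdef X \times_{\radice[n]{\spec R}} \spec R^{(n)}$ and $Y' \eqdef Y \times_{\radice[n]{\spec R}} \spec R^{(n)}$, the subscheme $Y' \subseteq X'$ is the scheme-theoretic image of the induced generic section $\spec K^{(n)} \to X'$.

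Now $X'$ is an algebraic space, proper over the semilocal Dedekind scheme $\spec R^{(n)}$, because $f$ is representable and proper. Applying the classical valuative criterion at each closed point of $\spec R^{(n)}$, the generic section extends uniquely to a section $\sigma \colon \spec R^{(n)} \to X'$. Since $X' \to \spec R^{(n)}$ is separated, $\sigma$ is a closed immersion, and since its image is a closed subscheme containing the image of the generic section, it must coincide with $Y'$. Therefore $Y' \to \spec R^{(n)}$ is an isomorphism, and by faithfully flat descent $Y \to \radice[n]{\spec R}$ is an isomorphism as well. Uniqueness of the extension then follows from separatedness of $f$ together with the uniqueness already established upstairs on $\spec R^{(n)}$.

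The main obstacle I anticipate is verifying that scheme-theoretic image commutes with the flat base change $\spec R^{(n)} \to \radice[n]{\spec R}$ and that the resulting $Y'$ is honestly what one expects; once that is in hand, the argument becomes a clean descent from the classical statement. A secondary subtlety is keeping track of representability: one needs that $X'$ is an algebraic space (not just a stack) to invoke the classical valuative criterion and to conclude that $\sigma$ is a closed immersion, and this is exactly where the hypothesis that $f$ is representable enters.
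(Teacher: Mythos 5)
Your proposal is correct and follows essentially the same line as the paper's proof: reduce to the DVR case, base change along the flat cover $\spec R^{(n)} \to \radice[n]{\spec R}$, use that scheme-theoretic image commutes with flat base change (Stacks Project Tag 0CMK), apply the classical valuative criterion to the algebraic space $X'$, identify the resulting section with $Y'$, and descend. One cosmetic remark: $R^{(n)} = R[t]/(t^n - \pi)$ is itself a DVR (Eisenstein, local with uniformizer $t$), so it is not merely semilocal Dedekind, though this does not affect the argument.
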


\begin{lemma}
	Let $R$ be a DVR, $n,m$ positive integers. Assume that $n$ is prime with the residue characteristic of $R$. Consider the $\mu_{n}$-torsor $\spec R^{(n)}\to\radice[n]{\spec R}$. There exists a unique way of extending the action of $\mu_{n}$ to $\radice[m]{\spec R^{(n)}}$, and the quotient $[\radice[m]{\spec R^{(n)}}/\mu_{n}]$ is isomorphic to $\radice[mn]{\spec R}$.
\end{lemma}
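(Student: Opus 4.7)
The plan is to make both the extended $\mu_{n}$-action and the claimed equivalence visible from a single quotient presentation, using the standard fact that a group extension produces an iterated quotient of stacks. Let $\pi$ be a uniformizer of $R$, set $S\eqdef R[u]/(u^{mn}-\pi)$, and let $\mu_{mn}$ act on $\spec S$ by $u\mapsto\xi u$. Then $[\spec S/\mu_{mn}]=\radice[mn]{\spec R}$ by the presentation recalled at the beginning of this section. Substituting $t\eqdef u^{m}$ rewrites $S$ as $R^{(n)}[u]/(u^{m}-t)$, and the subgroup $\mu_{m}\subset\mu_{mn}$ now acts by multiplication on $u$ alone; therefore $[\spec S/\mu_{m}]\simeq\radice[m]{\spec R^{(n)}}$, and the $\mu_{m}$-invariant map $\spec S\to\spec R^{(n)}$ is the canonical atlas of this root stack.

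Next I would invoke the fppf-exact Kummer sequence
\[1\longrightarrow\mu_{m}\longrightarrow\mu_{mn}\xrightarrow{\xi\mapsto\xi^{m}}\mu_{n}\longrightarrow 1.\]
Since $\mu_{m}$ is normal in the abelian group $\mu_{mn}$, the residual $\mu_{mn}$-action on $[\spec S/\mu_{m}]=\radice[m]{\spec R^{(n)}}$ factors canonically through the quotient $\mu_{n}=\mu_{mn}/\mu_{m}$, yielding the sought extension of the $\mu_{n}$-action. The standard iterated-quotient formula for an action of a group extension then gives
\[[\radice[m]{\spec R^{(n)}}/\mu_{n}]=[[\spec S/\mu_{m}]/\mu_{n}]\simeq[\spec S/\mu_{mn}]=\radice[mn]{\spec R}.\]
Compatibility with the original torsor $\spec R^{(n)}\to\radice[n]{\spec R}$ is checked directly: passing to $\mu_{m}$-invariants $S^{\mu_{m}}=R[u^{m}]=R[t]/(t^{n}-\pi)=R^{(n)}$, the produced $\mu_{n}$-action becomes precisely the tautological one $t\mapsto\zeta t$ that defines the torsor.

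Uniqueness of the extension follows from the universal property of root stacks: the $\mu_{n}$-action on $\spec R^{(n)}$ preserves the special divisor $D^{(n)}\subset\spec R^{(n)}$, so by functoriality of the root-stack construction there is a canonical induced $\mu_{n}$-action on $\radice[m]{(\spec R^{(n)},D^{(n)})}$, and any other extension compatible with the universal line bundle and section must agree with it up to a unique $2$-isomorphism. The hypothesis that $n$ is coprime to the residue characteristic enters here only to ensure that $\mu_{n}$ is étale, so that $\spec R^{(n)}\to\radice[n]{\spec R}$ is an actual étale $\mu_{n}$-torsor and the notion of a $\mu_{n}$-equivariant structure extending the given one is unambiguous. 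The main obstacle I anticipate is the clean verification of the iterated-quotient formula in the $2$-categorical setting, which I would handle by translating everything into fppf descent data along the $\mu_{m}$-torsor $\spec S\to\radice[m]{\spec R^{(n)}}$.
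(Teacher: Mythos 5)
Your existence and quotient-identification argument is correct and in fact more explicit than what the paper provides: the paper simply asserts that the natural action $\rho$ induced by functoriality makes $\radice[m]{\spec R^{(n)}}\to\radice[mn]{\spec R}$ a $\mu_{n}$-torsor, whereas your presentation via $S=R[u]/(u^{mn}-\pi)$, the filtration $\mu_{m}\subset\mu_{mn}$, and the iterated-quotient formula shows exactly why. The iterated-quotient step you flag as the main anticipated obstacle is standard and not where the difficulty lies.

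The genuine gap is in your uniqueness argument. You exhibit the canonical extension $\rho$ by functoriality and then assert that ``any other extension compatible with the universal line bundle and section must agree with it''; but the content of uniqueness is precisely to show that an \emph{arbitrary} $\mu_{n}$-action $\eta$ on $\radice[m]{\spec R^{(n)}}$ lying over the given action on $\spec R^{(n)}$ is automatically compatible with the root-stack structure, i.e.\ $2$-isomorphic to $\rho$. Since $\radice[m]{\spec R^{(n)}}$ has nontrivial automorphisms over $\spec R^{(n)}$ along the special fiber (the inertia there is $\mu_{m}$), this is not a formality. The paper handles it by observing that $\underisom(\rho,\eta)\to\radice[m]{\spec R^{(n)}}\times_{R}\mu_{n}$ is representable and proper, that the target is a disjoint union of root stacks over Dedekind domains (this is where coprimality of $n$ with the residue characteristic actually enters, via \'etaleness of $\mu_{n}$), and that the evident generic sections — where the root stack is trivial, so $\rho$ and $\eta$ agree — extend by Lemma~\ref{easyval}. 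That valuative extension step is the heart of the uniqueness claim and is missing from your proposal; you would need either to reproduce it or to prove directly that any $R^{(n)}$-automorphism of $\radice[m]{\spec R^{(n)}}$ restricting to the identity over the generic fiber is $2$-isomorphic to the identity.
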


\begin{proof}
	We have a natural action $\rho:\radice[m]{\spec R^{(n)}}\times_{R}\mu_{n}\to \radice[m]{\spec R^{(n)}}$ induced by the action on $\spec R^{(n)}$. The action $\rho$ gives a structure of $\mu_{n}$-torsor to the natural morphism $\radice[m]{\spec R^{(n)}}\to\radice[mn]{\spec R}$. Let $\eta:\radice[m]{\spec R^{(n)}}\times_{R}\mu_{n}\to \radice[m]{\spec R^{(n)}}$ be an action such that the diagram
	\[\begin{tikzcd}
		\radice[m]{\spec R^{(n)}}\times_{R}\mu_{n}\dar\rar["\eta"]	&	\radice[m]{\spec R^{(n)}}\dar	\\
		\spec R^{(n)}\times_{R}\mu_{n}\rar							&	\spec R^{(n)}
	\end{tikzcd}\]
	is $2$-commutative, we want to show that $\rho$ and $\eta$ are equivalent.
	
	Let $D\subset \spec R^{(n)}\times_{R}\mu_{n}$ be the pullback of the closed point of $\spec R^{(n)}$, since $\mu_{n}$ is finite étale over $R$ we have that $D$ is a reduced divisor. Since $\mu_{n}$ is étale, the natural morphism $\radice[m]{\spec R^{(n)}\times_{R}\mu_{n},D}\to\radice[m]{\spec R^{(n)}}\times_{R}\mu_{n}$ is an isomorphism. The scheme $\spec R^{(n)}\times_{R}\mu_{n}$ is finite étale over $\spec R^{(n)}$, hence it is a disjoint union of Dedekind domains, and $\radice[m]{\spec R^{(n)}}\times_{R}\mu_{n}=\radice[m]{\spec R^{(n)}\times_{R}\mu_{n},D}$ is a disjoint union of root stacks over Dedekind domains.
	
	The stack $\underisom(\rho,\eta)$ has a proper, representable morphism 
	\[\underisom(\rho,\eta)\to \radice[m]{\spec R^{(n)}}\times_{R}\mu_{n},\]
	and for every connected component of $\radice[m]{\spec R^{(n)}}\times_{R}\mu_{n}$ there is a generic section. By Lemma~\ref{easyval}, these generic sections extend to global sections, hence $\eta\simeq \rho$.
\end{proof}

\begin{corollary}\label{rootbc}
	Let $R$ be a DVR, $n,m$ positive integers. Assume that $n$ is prime with the residue characteristic of $R$. Let $X\to\radice[n]{\spec R}$ be a morphism, and assume that the base change of $X$ to $\spec R^{(n)}$ is isomorphic to $\radice[m]{\spec R^{(n)}}$. Then $X\simeq\radice[mn]{\spec R}$.
\end{corollary}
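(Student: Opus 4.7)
The plan is to deduce the corollary from the preceding lemma by descent along the $\mu_n$-torsor $p\colon \spec R^{(n)}\to \radice[n]{\spec R}$. The idea is that the given morphism $X\to\radice[n]{\spec R}$ is canonically reconstructed from its base change along $p$ together with the descent datum, which in this torsor situation is simply a compatible $\mu_n$-action; the previous lemma tells us that such an action is unique up to equivalence on $\radice[m]{\spec R^{(n)}}$, so $X$ is forced to coincide with the quotient $[\radice[m]{\spec R^{(n)}}/\mu_n]$.

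More concretely, I would first note that since $p$ is a $\mu_n$-torsor, the base change $X' \eqdef X\times_{\radice[n]{\spec R}}\spec R^{(n)}$ carries a canonical $\mu_n$-action lying over the standard action of $\mu_n$ on $\spec R^{(n)}$, and $X$ is recovered as the stack-theoretic quotient $[X'/\mu_n]$ in a $\mu_n$-equivariantly natural way. Then I would transport this $\mu_n$-action via the hypothesized isomorphism $\phi\colon X'\xrightarrow{\sim}\radice[m]{\spec R^{(n)}}$ to get an action $\eta$ on $\radice[m]{\spec R^{(n)}}$ sitting in the $2$-commutative square of the previous lemma over the standard action on $\spec R^{(n)}$.

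At this point, the previous lemma applies directly and gives $\eta\simeq\rho$, where $\rho$ is the canonical action whose quotient is $\radice[mn]{\spec R}$. Consequently
\[
X\simeq [X'/\mu_n]\simeq [\radice[m]{\spec R^{(n)}}/\mu_n] \simeq \radice[mn]{\spec R},
\]
which is the desired conclusion. I do not expect any serious obstacle: the real content has already been packaged into the previous lemma, and what remains is only the formal observation that $X$ is the quotient of $X'$ by the torsor action, so that uniqueness of the action on the model $\radice[m]{\spec R^{(n)}}$ immediately forces uniqueness of $X$ itself.
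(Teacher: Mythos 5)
Your proof is exactly the intended one: the paper leaves this corollary without a written proof precisely because it is the immediate consequence of the preceding lemma by $\mu_n$-descent along the torsor $\spec R^{(n)}\to\radice[n]{\spec R}$, which is what you spell out. The identification $X\simeq[X'/\mu_n]$ for the base change $X'$ of $X$ along a $\mu_n$-torsor, followed by transporting the equivariant structure through $\phi$ and invoking the lemma's uniqueness of the action on $\radice[m]{\spec R^{(n)}}$, is the correct and complete argument.
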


\begin{lemma}\label{etale}
	Let $R'/R$ be a local, quasi-finite étale extension of DVRs and $X$ a tame stack over $R$, $X'\eqdef X_{R'}$. If $X'\simeq\radice[n]{\spec R'}$, then $X\simeq\radice[n]{\spec R}$.
\end{lemma}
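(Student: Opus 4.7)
The plan is to construct the isomorphism $X\simeq\radice[n]{\spec R}$ by exhibiting on $X$ an $n$-th root of the Cartier divisor $\spec k\subseteq\spec R$ and then invoking the universal property of the root stack. The key observation is that on $\radice[n]{\spec R'}$ the pull-back of $\spec k'\subseteq\spec R'$ equals $n$ times the reduced closed fiber $\rho^{-1}(\spec k')_{\mathrm{red}}$, and this reduced closed fiber is intrinsic to $X_{R'}$ as a stack over $\spec R'$, independent of the chosen isomorphism with the root stack.

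First, since $X_{R'}\simeq\radice[n]{\spec R'}$ is flat over $R'$ and flatness is fpqc-local on the target, $X$ is flat over $R$, so $X_k\eqdef X\times_R\spec k\s X$ is an effective Cartier divisor. Let $D\s X$ denote its reduced substack. Because $R\arr R'$ is \'etale, the residue-field extension $k'/k$ is separable, and hence formation of the reduced structure on the closed fiber commutes with the base change to $R'$: concretely, $D\times_R R'\s X_{R'}$ is the reduced closed fiber of $X_{R'}$, which under the given isomorphism is the effective Cartier divisor $\rho^{-1}(\spec k')_{\mathrm{red}}\s\radice[n]{\spec R'}$, and satisfies $n(D\times_R R')=X_{R'}\times_{R'}\spec k'$ as Cartier divisors on $X_{R'}$. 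By faithfully flat descent of invertibility of the ideal sheaf $\cI_D$ along $\spec R'\arr\spec R$, the substack $D$ is itself an effective Cartier divisor on $X$ with $nD=X_k$.

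The triple $(\cO_X(D),\, s_D,\, \alpha)$, where $s_D$ is the tautological section and $\alpha\colon\cO_X(D)^{\otimes n}\larrowsim\cO_X$ is the canonical isomorphism sending $s_D^{\otimes n}$ to $\pi$, is thus an $n$-th root of $\spec k\s\spec R$ on $X$, and by the universal property of the root stack it defines a morphism $f\colon X\arr\radice[n]{\spec R}$ over $\spec R$. The base-changed morphism $f_{R'}$ is classified by the pull-back of this triple to $X_{R'}$, which agrees with the universal triple on $\radice[n]{\spec R'}$ transported via the given isomorphism; hence $f_{R'}$ is $2$-isomorphic to the given isomorphism, and in particular is itself an isomorphism. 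Since being an isomorphism of algebraic stacks is fpqc-local on the target, $f$ is an isomorphism, completing the proof.

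The main technical point is the compatibility of the reduced closed fiber with base change along $R\arr R'$, for which the \'etaleness hypothesis (equivalently separability of $k'/k$) is essential, together with faithfully flat descent of the invertibility of the ideal sheaf $\cI_D$ to transport the Cartier-divisor structure from $X_{R'}$ down to $X$.
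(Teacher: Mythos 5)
Your proof is correct but takes a genuinely different route from the paper. The paper descends the given isomorphism $\phi\colon\radice[n]{\spec R'}\xrightarrow{\sim} X_{R'}$ along the fppf cover $\radice[n]{\spec R'}\to\radice[n]{\spec R}$: it forms the $\underisom$-stack of $p_1^*\phi$ and $p_2^*\phi$ over $S=\radice[n]{\spec R'}\times_{\radice[n]{\spec R}}\radice[n]{\spec R'}$, extends its obvious generic section to a global one using Lemma~\ref{easyval}, and checks the cocycle condition on the generic point, producing a morphism $\radice[n]{\spec R}\to X$ which is then an isomorphism by descent. You instead build a morphism in the opposite direction, $X\to\radice[n]{\spec R}$, directly from the universal property of the root stack: you identify the reduced closed fiber $D\subseteq X$ as an intrinsic effective Cartier divisor with $nD=X_k$, using that formation of the reduced structure commutes with the separable residue extension $k'/k$ and that invertibility of $\cI_D$ descends fppf-locally. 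This sidesteps Lemma~\ref{easyval} and the cocycle condition entirely, since the classifying triple $(\cO_X(D),s_D,\alpha)$ on $X$ is canonical rather than obtained by gluing; and your check that $f_{R'}$ is $2$-isomorphic to $\phi^{-1}$ is tight because any two trivializations $\alpha$ sending $s_D^{\otimes n}$ to $\pi'$ differ by a unit $u$ with $(u-1)\pi'=0$, hence $u=1$ by $R'$-flatness. Both proofs conclude by fppf descent of the property of being an isomorphism. Your argument is arguably cleaner and more self-contained; the paper's fits the surrounding machinery (Lemma~\ref{easyval} is reused elsewhere).
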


\begin{proof}
	Let $K$ be the residue field of $R$, clearly we have that $X_{K}\to\spec K$ is an isomorphism. Write $A=\spec R'\otimes_{R}R'$, since $R'$ is quasi-finite étale over $R$ then $A$ is a product of Dedekind domains with a finite number of closed points. Let $D\subset \spec A$ be the effective, reduced divisor of all closed points and $S\eqdef \radice[n]{\spec R'}\times_{\radice[n]{\spec R}}\radice[n]{\spec R'}$, it is easy to see that $S\simeq\radice[n]{(\spec A,D)}$.
	
	Let $\phi:\radice[n]{\spec R'}\simeq X'\to X$ be the composite, and consider the two projections $p_{1},p_{2}:S\to\radice[n]{\spec R'}$. Since $X_{R'}$ is separated, then $X$ is separated, too, and hence $\underisom(p_{1}^{*}\phi,p_{2}^{*}\phi)$ is an algebraic stack with a proper, representable morphism to $S$. There is a generic section $S_{K}\to \underisom(p_{1}^{*}\phi,p_{2}^{*}\phi)$ which extends to a global section thanks to Lemma~\ref{easyval}, this gives descent data for a morphism $f:\radice[n]{\spec R}\to X$ (the cocycle condition can be checked on the generic point, where it is obvious). Since the base change to $R'$ of $f$ is an isomorphism, we have that $f$ is an isomorphism, too.
\end{proof}

\begin{proposition}\label{tameroot}
	Let $X$ be a normal, tame stack of finite type over a DVR $R$, and assume that there is a generic section $\spec K\to X$ which is an open, scheme-theoretically dense embedding. Then $X\simeq\radice[n]{\spec R}$ for some $n$.
\end{proposition}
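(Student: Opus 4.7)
The plan is to show the coarse moduli space of $X$ equals $\spec R$, identify the closed-fiber stabilizer as $\mmu_n$, build a canonical map $X \to \radice[n]{\spec R}$ from the Cartier divisor structure of $X$, and verify this map is an isomorphism using Lemma~\ref{etale} together with an explicit analysis in the strictly henselian case.

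First, I would establish that the coarse moduli space $M$ of $X$ is $\spec R$: since $\spec K \hookrightarrow X$ is dense, $M$ is integral and normal of finite type over $\spec R$ with function field $K$, and every closed point of $M$ maps to the closed point of $\spec R$ via a local morphism, so its local ring is a DVR with fraction field $K$ dominating $R$ and must therefore equal $R$. Since $X$ is integral and dominant over the DVR $\spec R$, it is flat, and $X_k$ is an effective Cartier divisor whose reduced substack $X_k^{\mathrm{red}}$ is the unique closed residual gerbe. Tameness gives $X_k^{\mathrm{red}} = \cB_k \Gamma$ for a finite linearly reductive group scheme $\Gamma$ over $k$, and $\Gamma$ acts on the one-dimensional conormal bundle of $X_k^{\mathrm{red}}$ in $X$. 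This action is faithful: a nontrivial kernel element would act trivially on the completed local ring at the closed point (by a Nakayama/slice argument valid for linearly reductive group schemes) and hence give nontrivial generic inertia, contradicting that $\spec K \hookrightarrow X$ is a scheme. So $\Gamma \hookrightarrow \gm_k$, forcing $\Gamma \simeq \mmu_n$; comparing invariant subrings locally then gives $X_k = n \cdot X_k^{\mathrm{red}}$ as Cartier divisors.

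With these ingredients the map $f \colon X \to \radice[n]{\spec R}$ is classified by the triple $(L, s, \alpha)$ with $L = \cO_X(X_k^{\mathrm{red}})$, $s$ the canonical section vanishing along $X_k^{\mathrm{red}}$, and $\alpha \colon L^{\otimes n} \simeq \cO_X$ the trivialization induced by $n \cdot X_k^{\mathrm{red}} = \mathrm{div}(\pi)$, which sends $s^{\otimes n}$ to $\pi$. To show $f$ is an isomorphism, I would reduce to the strictly henselian case via Lemma~\ref{etale} and apply the Abramovich-Olsson-Vistoli local structure theorem to write $X_{R^{\mathrm{sh}}}$ as a global quotient $[\spec A / \mmu_n]$; a Luna-slice analysis then identifies $A$ with $R^{\mathrm{sh}}[t]/(t^n - u\pi)$ for some unit $u$, and after absorbing $u$ into the uniformizer this is the standard presentation of $\radice[n]{\spec R^{\mathrm{sh}}}$. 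Alternatively, one may induct on the prime factorization of $n$, dispatching primes $\ell$ coprime to $p = \cha k$ through Corollary~\ref{rootbc} (applied to the base change along $\spec R^{(\ell)} \to \radice[\ell]{\spec R}$) and reserving the strictly henselian analysis above for the residual $p$-primary case.

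The main obstacle is the identification $\Gamma \simeq \mmu_n$ together with the multiplicity relation $X_k = n \cdot X_k^{\mathrm{red}}$, particularly in mixed characteristic where $\mmu_{p^s}$ is non-étale so the Luna-slice analysis and $p$-primary descent must be handled without appealing to étale-local triviality of the relevant torsors.
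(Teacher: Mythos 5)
Your strategy is genuinely different from the paper's. The paper does not build the map $X\to\radice[n]{\spec R}$ directly; instead, after reducing to $R$ Nagata and invoking the AOV local structure theorem, it splits the linearly reductive group $G$ into an exact sequence $1\to\Delta\to G\to H\to 1$ with $\Delta$ diagonalizable of $p$-power order and $H$ constant tame, treats the Deligne--Mumford case first (where $\Delta$ acts freely, so the chart $U$ is normal and one can appeal to Rydh's Lemma~8.5 together with Lemma~\ref{normalroot}), and then reduces the general case to this one via the normalization of $[V/H]$ and étale base change along $\spec R^{(n)}\to\radice[n]{\spec R}$, capped off by Corollary~\ref{rootbc}. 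The idea of extracting a canonical classifying triple $(L,s,\alpha)$ from the Cartier divisor $X_k^{\mathrm{red}}$ and then checking the induced $f\colon X\to\radice[n]{\spec R}$ is an isomorphism is attractive and more transparent about \emph{why} the answer is a root stack.

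However, there is a genuine gap at exactly the point you flag. The claim that $\Gamma$ acts faithfully on the conormal line, and then the claim that in the strictly henselian case $A\simeq R^{\mathrm{sh}}[t]/(t^n-u\pi)$, both rest on a linearization of the $\Gamma$-action on the chart $\spec A$ in which the maximal ideal of $A$ is generated over $R^{\mathrm{sh}}$ by a single eigenvector. That requires $\frm_A/\frm_A^2$ to be one-dimensional over the residue field, i.e.\ $A$ to be regular. But when $p\mid n$ the $\mmu_n$-torsor $\spec A\to X$ is not étale (its fibers are the non-reduced scheme $\mmu_n$), so regularity of $X$ does not descend to $A$; you cannot read off the structure of $A$ from the regularity of $X$ without a separate argument. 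This is precisely where the paper works hardest: in Case~1 it arranges for $G$ to be constant (so the torsor is étale and $U$ inherits normality), and in Case~2 it reduces to that situation by first normalizing $[V/H]$, then base-changing along a prime-to-$p$ étale cover, and only then applying Rydh's Lemma~8.5 and Lemma~\ref{normalroot}, which do the job you want the Luna slice to do but are stated in a form that does not presuppose a regular chart. To repair your argument you would need either to prove directly that $A$ is regular (equivalently normal, in dimension one), or to replace the slice analysis by an argument that works at the level of $X$ itself --- for instance, showing $f$ is representable (which does follow from faithfulness of $\chi$), finite, and birational onto the normal target $\radice[n]{\spec R}$, and then deducing $f$ is an isomorphism; but the faithfulness of $\chi$ is again the sticking point, since the usual ``finite group acts faithfully on the tangent space at a fixed point of a smooth scheme'' argument needs to be re-established for infinitesimal linearly reductive group schemes, and that is the content you have deferred.

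One smaller point: the multiplicity relation $X_k=n\cdot X_k^{\mathrm{red}}$, which you need to produce the trivialization $\alpha$, is asserted via ``comparing invariant subrings locally'' but is not actually argued; it is a consequence of the same local structure that you have not yet pinned down.
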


\begin{proof}
	Since $X$ is of finite type over $R$, there exists a DVR $R_{0}\subset R$ which is the localization of a $\ZZ$-algebra of finite type and a stack $X_{0}/R_{0}$ such that $X\simeq X_{0,R}$. Furthermore, we may assume that the uniformizing parameter of $R_{0}$ maps to a uniformizing parameter of $R$, so that $\radice[n]{\spec R_{0}}\times_{R_{0}}\spec R\simeq\radice[n]{\spec R}$. Up to replacing $R,X$ with $R_{0},X_{0}$ we may assume that $R$ is Nagata.	Let $k$ be the residue field of $R$ and $p$ its characteristic.
	
	By \cite[Theorem 3.2]{dan-olsson-vistoli1}, there exists a DVR $R'$ quasi-finite and étale over $R$ and a finite, flat, linearly reductive group scheme $G/R'$ with an action on a scheme $U$ finite over $R'$ such that $X_{R'}\simeq[U/G]$. Up to enlarging $R'$, by \cite[Lemma 2.20]{dan-olsson-vistoli1} there exists a diagonalizable flat, closed subgroup $\Delta\subset G$ such that $H\eqdef G/\Delta$ is constant and tame. We may furthermore assume that the degree of $\Delta$ is a power of $p$. Thanks to Lemma~\ref{etale}, we may assume $R'=R$.
	
	{\bf Case 1.} $X$ is tame and Deligne-Mumford. Since $\Delta_{k}$ is connected and $X$ is Deligne-Mumford and generically a scheme, the action of $\Delta$ is free (because otherwise $X$ would have ramified inertia), hence up to replacing $U$ with $U/\Delta$ we may assume that $G$ is constant and tame. Since $X$ is normal and $G$ is constant and tame, then $U$ is normal, too. If $u\in U$ is a geometric point, the stabilizer $G_{u}$ acts faithfully on the tangent space, hence the automorphism groups of the points of $X$ are cyclic and tame. By \cite[Lemma 8.5]{ryd11} and Lemma~\ref{normalroot}, since $X$ is normal we have $X\simeq\radice[n]{\spec R}$ for some $n$.
	
	{\bf Case 2.} $X$ is tame. Let $V\eqdef U/\Delta$ and $Y_{0}\eqdef [V/H]$, we have that $Y_{0}$ is Deligne-Mumford and there is a natural birational morphism $X\to Y_{0}$ whose relative inertia is diagonalizable. Let $Y\to Y_{0}$ be the normalization, it is finite over $Y_{0}$ since $R$ is Nagata and since $X$ is normal the morphism $X\to Y_{0}$ lifts to a morphism $X\to Y$. By case $1$, there exists an $n$ prime with $p$ and an isomorphism $Y\simeq\radice[n]{\spec R}$. Consider the morphism $\spec R^{(n)}\to\radice[n]{\spec R}$, it is a $\mu_{n}$-torsor and hence finite étale since $n$ is prime with $p$, it follows that the base change $X\times_{\radice[n]{\spec R}}\spec R^{(n)}$ is normal with diagonalizable inertia. By \cite[Lemma 8.5]{ryd11} and Lemma~\ref{normalroot}, we have $X\times_{\radice[n]{\spec R}}\spec R^{(n)}\simeq \radice[m]{\spec R^{(n)}}$ for some integer $m$, hence $X\simeq\radice[mn]{\spec R}$ thanks to Corollary~\ref{rootbc}. 
\end{proof}

\begin{proof}[Proof of Theorem~\ref{thm:valuative}]

By base change, we may assume that $Y=\spec R$ and that $X$ is a tame stack proper over $R$. With an argument similar to the one in the proof of Proposition~\ref{tameroot}, we may reduce to the case in which $R$ is Nagata.

By \cite[Theorem B]{ryd11}, we may assume that $\spec K\to X$ is an open, scheme theoretically dense embedding. Since $R$ is Nagata, the normalization $\overline{X}$ is finite and representable over $X$. By Proposition~\ref{tameroot} we have $\overline{X}=\radice[n]{\spec R}$, hence an extension exists. If $m$ is another integer with a representable extension $\radice[m]{\spec R}\to X$, it factors through $\overline{X}=\radice[n]{\spec R}$ since $\radice[m]{\spec R}$ is normal by Lemma~\ref{normalroot}. We conclude the proof of Theorem~\ref{thm:valuative} by Lemma~\ref{rootmor}.

\end{proof}

\section{The Lang--Nishimura theorem}\label{sec:lang-nishimura}

Here is our version of the Lang--Nishimura theorem for tame stacks.

\begin{theorem}\label{LN}
	Let $S$ be a scheme and $X\dashrightarrow Y$ a rational map of algebraic stacks over $S$, with $X$ locally noetherian and integral and $Y$ tame and proper over $S$. Let $k$ be a field, $s\colon \spec k \arr S$ a morphism. Assume that $s$ lifts to a regular point $\spec k \arr X$; then it also lifts to a morphism $\spec k \arr Y$.
\end{theorem}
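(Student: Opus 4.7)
The plan mirrors the classical Lang--Nishimura argument. I will produce a DVR $R$ with residue field $k$ together with a morphism $\spec R\to X$ whose closed point realises $\xi$ and whose generic point $\spec K$ lies in the open dense domain of definition $U\s X$ of the rational map $X\dashrightarrow Y$. Composing yields $\spec K\to U\to Y$; Theorem~\ref{thm:valuative} then supplies a representable extension $\radice[n]{\spec R}\to Y$, and Corollary~\ref{cor:valuative} extracts the desired $k$-point of $Y$ above $s$.

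First I would base change along $s\colon\spec k\to S$ to reduce to the case $S=\spec k$, and then replace $X$ by a scheme. Using the regularity of $\xi$, pick a smooth atlas $V\to X$ carrying a $k$-rational lift $\tilde\xi\in V(k)$ at which $V$ is regular, and shrink $V$ to a connected regular -- hence integral -- open neighbourhood of $\tilde\xi$. The rational map $V\dashrightarrow Y$ is defined on the pullback of $U$, which is open and dense by flatness of $V\to X$. So we may assume $X$ is a regular integral locally noetherian $k$-scheme with a $k$-rational point $\tilde\xi$.

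To construct the DVR, apply Cohen structure to obtain $\hat\cO_{X,\tilde\xi}\simeq k\ds{t_1,\dots,t_n}$; the closed complement $Z=X\setminus U$ is then cut out formally at $\tilde\xi$ by a nonzero $h\in k\ds{t_1,\dots,t_n}$. After a continuous $k$-automorphism of $k\ds{t_1,\dots,t_n}$ -- linear if $k$ is infinite, of the form $t_i\mapsto t_i+t_n^{N_i}$ with suitable $N_i$ if $k$ is finite -- one can arrange $h$ to be $t_n$-regular, and Weierstrass preparation writes $h=u(t_n^d+a_1 t_n^{d-1}+\dots+a_d)$ with $u$ a unit and $a_i\in(t_1,\dots,t_{n-1})$. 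The quotient $k\ds{t_1,\dots,t_n}\twoheadrightarrow k\ds{s}$ sending $t_1,\dots,t_{n-1}\mapsto 0$ and $t_n\mapsto s$ sends $h$ to $u(0,\dots,0,s)\cdot s^d\ne 0$; the corresponding morphism $\spec k\ds{s}\to X$ therefore has closed point $\tilde\xi$ and generic point in $U$, so $R=k\ds{s}$ is the DVR we wanted.

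The main obstacle lies in the very first reduction: one must use the hypothesis that $\xi$ is a regular point to produce a smooth chart with a $k$-rational lift at which the chart is regular. This is where the stack-theoretic content is concentrated; once we are over an honest regular scheme with a $k$-rational point, the construction of the DVR via Cohen and Weierstrass is a standard argument valid over any field (one simply has to be a little careful with non-linear substitutions over finite fields), and the final step is an immediate invocation of Theorem~\ref{thm:valuative} and Corollary~\ref{cor:valuative}.
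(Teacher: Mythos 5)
Your overall strategy coincides with the paper's: pass to a smooth scheme chart of $X$, construct a DVR with residue field $k$ whose closed point hits the given $k$-point and whose generic point lands in the domain of the rational map, then invoke Corollary~\ref{cor:valuative}. The Cohen--Weierstrass construction of the DVR is a reasonable direct proof of the existence statement that the paper simply cites from a companion paper (its Lemma~4.3). The gap is in your opening base change to $S = \spec k$. The theorem does not assume $X\arr S$ flat, so $X_k$ need not be integral, and, more seriously, need not be regular at the induced $k$-point: take $S = \AA^1_{k_0}$ in characteristic $\ne 2$, $X = \AA^1_{k_0}\arr S$ the squaring map, $s$ and $\xi$ both the origin. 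Then $\cO_{X,\xi}$ is a DVR, but $X_k = \spec k_0[t]/(t^2)$, which is non-reduced, so the induced point is not regular. Your subsequent appeal to Cohen's structure theorem to write $\hat{\cO}_{X,\tilde\xi}\simeq k\ds{t_1,\dots,t_n}$ then has no foundation, since after the base change the hypothesis ``$\xi$ is a regular point'' has been lost.

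Nor can you salvage the equicharacteristic reduction some other way: if $\cO_{X,\xi}$ itself has mixed characteristic (e.g.\ $X = \spec\ZZ$ at a prime $(p)$), then Cohen gives $\hat{\cO}_{X,\xi}\simeq V\ds{t_1,\dots,t_{n-1}}$ with $V$ a complete DVR with residue field $k$, not $k\ds{t_1,\dots,t_n}$, and there is no base change that preserves the hypotheses and removes the obstruction. The correct fix is to drop the base change entirely and run the Weierstrass argument uniformly over the coefficient ring (a field or a complete DVR, according to the two cases of Cohen's theorem); Weierstrass preparation holds over $V\ds{t_1,\dots,t_{n-1}}$, so this is genuinely only a matter of bookkeeping. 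Once that is done your outline reproves the auxiliary DVR lemma and then matches the paper's proof.
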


In the standard version of the Lang--Nishimura theorem (see for example \cite[Theorem~3.6.11]{poonen-book}), which is a standard tool in arithmetic geometry, $X$ and $Y$ are schemes, and $S = \spec k$. In the applications that we have in mind, the additional flexibility of having a base scheme is important.
	
	\begin{proof}
		According to \cite[Théorème~6.3]{laumon-moret-bailly} we can find a smooth morphism $U \arr X$ with a lifting $\spec k \arr U$ of $\spec k \arr X$; hence we can replace $X$ by $U$, and assume that $X$ is scheme. Furthermore, if $x$ denotes the image of $\spec k \arr X$ and $k(x)$ its residue field, we have a factorization $\spec k \arr \spec k(x) \arr X$, and we may assume $k = k(x)$. If $x$ has height $0$, then $\spec k$ dominates $X$, and the composite $\spec k \arr X \dashrightarrow Y$ is well defined.
		
		Otherwise, call $U \subseteq X$ the open subset where $f$ is defined. By \cite[Lemma 4.3]{giulio-angelo-genericity} there exists a DVR $R$ with residue field $k = k(x)$ and a morphism $\spec R \arr X$ that maps the generic point $\spec K$ of $\spec R$ into $U$. Thus we get a morphism $\spec K \arr U$, and we apply Corollary~\ref{cor:valuative} to the diagram
		   \[
   \begin{tikzcd}
   \spec K \rar\dar[hook] & U \dar[hook]\rar["f"] & Y \dar\\
   \spec R \rar & X \rar & S\
   \end{tikzcd}
   \]
thus getting the desired morphism $\spec k \arr Y$.
	\end{proof}

The Lang--Nishimura theorem fails for non-tame separated stacks. Let us give two examples, one in mixed characteristic, the other in positive characteristic.

\begin{example}\label{counterLN}
Let $X \arr \spec R$ be the stack constructed in \ref{counter}, it is a non-tame regular Deligne-Mumford stack. Let $k$ be the residue field of $R$. There is a rational map $\spec R\dashrightarrow X$ and $\spec R$ has a $k$ rational point, but $X$ has no $k$-rational points.
\end{example}

\begin{example}\label{counterLNP}
	Let $C_{0}$ be a smooth, projective curve of positive genus over a finite field $F$ of characteristic $p$ with $C_{0}(F)\neq \emptyset$. Let $a$ be an indeterminate, write $k \eqdef F(a)$ and $C \eqdef C_{0,k}$; since $C_{0}$ has positive genus $C(k) = C_{0}(F)$ is finite. Let $f\in k(C)$ be a rational function such that each rational point is a zero of $f$ (this can be easily found using Riemann-Roch). Consider the ramified cover $D\to C$ given by the equation
	\[t^{p}-f^{p-1}t=a\,;\]
in other words, $D$ is the smooth projective curve associated with the field extension $k(C)[t]/(t^{p}-f^{p-1}t-a)$. Let $c\in C(k)$ be a rational point, and write $R_{c} \eqdef \cO_{C,c}[t]/(t^{p}-f^{p-1}t-a)$, it is a normal domain: if $\overline{R}_{c}$ is the normalization, both $R_{c}\otimes k(C)\to \overline{R}_{c}\otimes k(C)$ and $R_{c}\otimes k\to \overline{R}_{c}\otimes k$ are isomorphisms for degree reasons since $R_{c}\otimes k = k[t]/(t^{p}-a)$ is a field of degree $p$ over $k$. Hence, $R_{c}$ is a DVR with residue field $k' \eqdef k[t]/(t^{p}-a)$. It follows that $D$ has no $k$-rational points.
	
	The cyclic group $C_{p}$ acts on $D$ by $t\mapsto t+f$, the field extension $k(D)/k(C)$ is a cyclic Galois cover and $C$ is the quotient scheme $D/C_{p}$. Let $X$ be the quotient stack $[D/C_{p}]$, there is a natural birational morphism $X\to C=D/C_{p}$. A rational point $\spec k\to X$ corresponds to a $C_{p}$-torsor $\spec A\to\spec k$ with an equivariant morphism $\spec A\to D$: since the fibers of $D\to C$ over rational points are isomorphic to $\spec k'$, a rational point of $X$ gives an embedding of $k'$ in an étale algebra $A$, which is clearly absurd. It follows that $X$ is a proper Deligne-Mumford stack over $k$ with $X(k)=\emptyset$ and a birational map $C\dashrightarrow X$.
\end{example}

As a consequence of Theorem~\ref{LN}, we can decide whether a residual gerbe of a tame stack is neutral or not by looking at a resolution of singularities of the coarse moduli space. We find this rather surprising.

\begin{corollary}\label{cor:moduli}
	Let $X$ be a locally noetherian, regular and integral tame stack with coarse moduli space $X\to M$, and $\overline{M} \arr M$ a proper birational morphism, with $\overline{M}$ integral and regular. Assume that there is a lifting $\spec k(M) \arr X$ of the generic point\/ $\spec k(M) \arr M$.
	
	If $k$ is a field and $m\colon \spec k\to M$ a morphism, then $m$ lifts to a morphism $\spec k \arr X$ if and only if it lifts to a morphism $\spec k \arr \overline{M}$.\qed
\end{corollary}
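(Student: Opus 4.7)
My plan is to reduce the corollary to two invocations of Theorem~\ref{LN}, taking $S = M$ throughout. Before doing so, I would manufacture from the hypotheses a pair of rational maps $X\dashrightarrow \overline{M}$ and $\overline{M}\dashrightarrow X$ over $M$; each implication of the corollary then follows directly from Theorem~\ref{LN} applied to one of them.

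The coarse moduli morphism $\pi\colon X\to M$ is proper and quasi-finite by Keel--Mori, hence of finite type, so the given lifting $\spec k(M)\to X$ of the generic point extends to a morphism $U\to X$ over some dense open $U\subseteq M$. Base-changing along $\overline{M}\to M$, we obtain a rational map $\overline{M}\dashrightarrow X$ over $M$, whose domain $\overline{M}\times_M U$ is dense in $\overline{M}$ because $\overline{M}\to M$ is dominant. In the other direction, $\overline{M}\to M$ is an isomorphism over some dense open $V\subseteq M$, and the composition $X\times_M V\to V \xrightarrow{\sim} \overline{M}|_V \hookrightarrow \overline{M}$ furnishes a rational map $X\dashrightarrow \overline{M}$ over $M$, whose domain $X\times_M V$ is dense in $X$ since $\pi$ is quasi-finite and dominant.

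For the implication ``$m$ lifts to $X$ implies $m$ lifts to $\overline{M}$'', I would apply Theorem~\ref{LN} with source $X$, target $\overline{M}$, and the second rational map above: $X$ is locally noetherian and integral by assumption, $\overline{M}$ is a scheme (hence trivially tame) and proper over $S = M$, and the hypothesised lift $\spec k\to X$ is automatically a regular point because $X$ is regular. The converse implication is symmetric, applying Theorem~\ref{LN} to $\overline{M}\dashrightarrow X$ with source $\overline{M}$ (regular, locally noetherian, integral) and target $X$ (tame and proper over $M$).

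The only subtle step is spreading out the generic lift $\spec k(M)\to X$ to a morphism on a dense open of $M$; this uses the finite-type property of the coarse moduli morphism in an essential way. Once the two rational maps have been exhibited, Theorem~\ref{LN} does all the remaining work, and the rest of the argument is formal bookkeeping.
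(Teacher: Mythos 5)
Your proof is correct and follows the route the paper evidently intends: the corollary is marked as an immediate consequence of Theorem~\ref{LN}, and your argument makes that precise by producing the two rational maps $X\dashrightarrow\overline{M}$ (from $\overline{M}\to M$ being an isomorphism over a dense open) and $\overline{M}\dashrightarrow X$ (by spreading out the generic lift, using that the coarse moduli morphism is of finite type) and invoking Theorem~\ref{LN} over $S=M$ in each direction, with regularity of $X$ and $\overline{M}$ supplying the ``regular point'' hypothesis. The only cosmetic quibble is that density of $X\times_M V$ in $X$ needs only that $\pi$ is dominant and $X$ irreducible, not quasi-finiteness; and, like the paper, you implicitly allow $S=M$ to be an algebraic space rather than a scheme, which is harmless since the proof of Theorem~\ref{LN} goes through unchanged.
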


So, for example, if $M$ is regular all morphisms $\spec k \arr M$ lift to $X$, and all residual gerbes are neutral.

\bibliographystyle{amsalpha}
\bibliography{main}

\end{document}